\newtheorem{Teo}{Theorem}[section]
\newtheorem{Prop}[Teo]{Proposition}
\newtheorem{Cor}[Teo]{Corollary}
\theoremstyle{definition}
\newtheorem{Def}[Teo]{Definition}
\newtheorem{Obs}[Teo]{Remark}
\newcommand{\N}{\mathbb{N}}
\newcommand{\lra}{\longrightarrow}
\newcommand{\VR}{\mathcal{O}}
\newcommand{\MI}{\mathfrak{m}}
\newcommand{\SU}{\mbox{\rm supp}}
\begin{document}
\title{Key polynomials and minimal pairs}
\author{Josnei Novacoski}
\thanks{During part of the realization of this project the author was supported by a grant from Funda\c c\~ao de Amparo \`a Pesquisa do Estado de S\~ao Paulo (process number 2015/23409-7).}

\begin{abstract}
In this paper we establish the relation between key polynomials (as defined in \cite{SopivNova}) and minimal pairs of definition of a valuation. We also discuss truncations of valuations on a polynomial ring $K[x]$. We prove that a valuation $\nu$ is equal to its truncation on some polynomial if and only if $\nu$ is valuation-transcendental. Another important result of this paper is that if $\mu$ is any extension of $\nu$ to $\overline K[x]$ and $\Lambda$ is a complete sequence of key polynomials for $\nu$, without last element, then for each $Q\in \Lambda$ there exists a suitable root $a_Q\in \overline K$ of $Q$ such that $\{a_Q\}_{Q\in \Lambda}$ is a pseudo-convergent sequence defining $\mu$.
\end{abstract}

\maketitle
\section{Introduction}
Given a valuation $\nu$ on a field $K$, it is important to understand the possible extensions of $\nu$ to  $K[x]$, the polynomial ring in one indeterminate over $K$. This problem has been extensively studied and many objects have been introduced to describe such extensions. For instance, MacLane introduced in \cite{Mac} the concept of \textit{key polynomials} and proved that in the discrete case, every extension of $\nu$ to $K[x]$ is determined by a (possibly infinite) sequence of key polynomials. In \cite{Vaq}, Vaqui\'e generalized this result, proving that every valuation is defined by a sequence of key polynomials, as long as we admit a special type of key polynomials without immediate predecessor, which are called \textit{limit key polynomials}. The definition of key polynomials by MacLane and Vaqui\'e is recursive, in the sense that they serve to augment a given valuation (or a sequence of valuations in the case of limit key polynomials).

An alternative definition of key polynomials was introduced in \cite{SopivNova}. This definition differs from the MacLane--Vaqui\'e's in a very essential way: it gives a way to truncate a valuation instead of augmenting it. This property gives many advantages when dealing with valuations. For instance, it allows us to determine whether a polynomial is a key polynomial, without having to define recursively a preceding valuation (or sequence of valuations). It also gives a more natural way to handle applications. For instance, in the local uniformization problem, it provides a way of proving results for a given truncation of the valuation (for more details see the appendix of this paper). A comparison between the MacLane--Vaqui\'e key polynomials and the one presented in \cite{SopivNova} can be found in \cite{Spivmahandjul}.

Two other objects used to study extensions of valuations are pseudo-convergent sequences, introduced by Kaplansky in \cite{Kapl} and minimal pairs introduced by Alexandru, Popescu and Zaharescu in \cite{Kand1} and \cite{Kand2}. These objects have importance on their own. For instance, pseudo-convergent sequences are used (implicitly) by Knaf and  Kuhlmann in \cite{KK} to prove that every place admits local uniformization in a finite extension of the function field. On the other hand, minimal pairs were used in \cite{Kand1} to prove Nagata's conjecture on the structure of the residue field extension of a \textit{residually transcendental extension}.

The first goal of this paper is to establish the relation between minimal pairs and key polynomials. Fix a valuation $\nu$ on $K[x]$, the polynomial ring in one indeterminate over the field $K$. We fix an algebraic closure $\overline K$ of $K$ and an extension $\mu$ of $\nu$ to $\overline K[x]$. For a monic polynomial $f\in K[x]$, we define
\[
\delta(f):=\max\{\mu(x-a)\mid a\mbox{ is a root of }f\}.
\]
A \textbf{minimal pair for $\nu$} is a pair $(a,\delta)\in\overline K \times \mu(\overline K[x])$ such that for every $b\in\overline K$, if $\mu(b-a)\geq \delta$, then $[K(b):K]\geq [K(a):K]$. If in addition, $\mu(x-a)=\delta\geq \mu(x-b)$ for every $b\in \overline K$, then $(a,\delta)$ is called a \textbf{minimal pair of definition for $\nu$}.

For a positive integer $r$ and $f\in K[x]$ let $\partial_r f$ be the \textbf{$r$-th formal derivative} of  $f$, i.e., $\partial_r f$ are the uniquely determined polynomials for which the Taylor expansion
\[
f(x)-f(a)=\sum_{i=1}^{\deg(f)}\partial_if(a)(x-a)^i,
\]
is satisfied for every $a\in K$. For a polynomial $f\in K[x]$ let
\[
\epsilon(f)=\max_{r\in \N}\left\{\frac{\nu(f)-\nu(\partial_rf)}{r}\right\}.
\]
A monic polynomial $Q\in K[x]$ is said to be a \textbf{key polynomial} (of level $\epsilon (Q)$) if for every $f\in K[x]$ if $\epsilon(f)\geq \epsilon(Q)$, then $\deg(f)\geq\deg(Q)$.

Given two polynomials $p,q\in K[x]$, there exist uniquely determined $p_0,\ldots, p_n\in K[x]$ with
\begin{equation}\label{standdecompofp}
p=p_0+p_1q+\ldots+p_nq^n
\end{equation}
such that for each $i$, $p_i=0$ or $\deg(p_i)<\deg(q)$. Equation (\ref{standdecompofp}) will be called the \textbf{$q$-expansion of $p$}. Given a valuation $\nu$ on $K[x]$ and a polynomial $q\in K[x]$ we can define the mapping
\[
\nu_q(p):=\min_{0\leq i\leq n}\{\nu(p_iq^i)\}.
\]
This mapping is called the \textbf{$q$-truncation of $\nu$}. It is natural to ask when $\nu_q$ is a valuation. In \cite{SopivNova}, we prove that if $Q$ is a key polynomial for $\nu$, then $\nu_Q$ is a valuation on $K[x]$.

The main result of this paper is the following:
\begin{Teo}\label{main1}
Let $Q\in K[x]$ be a monic irreducible polynomial and choose a root $a$ of $Q$ such that $\mu(x-a)=\delta(Q)$. Then $Q$ is a key polynomial for $\nu$ if and only if $(a,\delta(Q))$ is a minimal pair for $\nu$. Moreover, $(a,\delta(Q))$ is a minimal pair of definition for $\nu$ if and only if $\nu=\nu_Q$.
\end{Teo}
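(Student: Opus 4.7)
The plan rests on first establishing the identity $\epsilon(f) = \delta(f)$ for every monic polynomial $f \in K[x]$. Factoring $f = \prod_{j=1}^n (x - a_j)$ in $\overline{K}[x]$ and setting $\delta_j := \mu(x - a_j)$ ordered so that $\delta_1 \geq \cdots \geq \delta_n$, one has $\nu(f) = \sum_j \delta_j$. The Taylor-expansion identity $\partial_r f(x) = e_{n-r}(x - a_1, \ldots, x - a_n)$, combined with the strong triangle inequality applied to this sum of monomials, yields $\mu(\partial_r f) \geq \sum_{j > r} \delta_j$, whence $(\nu(f) - \nu(\partial_r f))/r \leq (\delta_1 + \cdots + \delta_r)/r \leq \delta_1 = \delta(f)$. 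Equality is attained by choosing $r$ equal to the multiplicity of the value $\delta_1$ among the $\delta_j$: the minimum defining $\mu(\partial_r f)$ is then uniquely realized, so $\mu(\partial_r f) = \sum_{j > r} \delta_j$ and $(\nu(f) - \nu(\partial_r f))/r = \delta_1$.

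Granting this identity, the first equivalence follows by standard ultrametric manipulations. If $Q$ is a key polynomial and $b \in \overline{K}$ satisfies $\mu(b - a) \geq \delta(Q)$, then $\mu(x - b) \geq \delta(Q)$ by the strong triangle inequality, so the minimal polynomial $g$ of $b$ over $K$ satisfies $\epsilon(g) = \delta(g) \geq \mu(x - b) \geq \delta(Q) = \epsilon(Q)$, forcing $\deg g \geq \deg Q$. Conversely, if $(a, \delta(Q))$ is a minimal pair and $f \in K[x]$ satisfies $\epsilon(f) \geq \epsilon(Q)$, then some root $c$ of $f$ has $\mu(x - c) \geq \delta(Q)$, hence $\mu(c - a) \geq \delta(Q)$; the minimal-pair hypothesis then forces the minimal polynomial of $c$ (and thus $f$) to have degree at least $\deg Q$.

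For the ``moreover'' statement, direction ($\Rightarrow$), the hypothesis $\delta(Q) \geq \mu(x - b)$ for every $b \in \overline{K}$ forces $\mu$ on $\overline{K}[x]$ to coincide with the monomial valuation $\mu(\sum_i c_i (x - a)^i) = \min_i (\mu(c_i) + i\delta(Q))$ for $c_i \in \overline{K}$: each linear factor satisfies $\mu(x - b) = \min(\delta(Q), \mu(a - b))$, and summing over the roots of an arbitrary polynomial verifies the monomial formula. Writing $Q = (x - a) u(x)$ in $\overline{K}[x]$ with $u(a) \neq 0$ and Taylor-expanding $p_i Q^i = p_i u^i (x - a)^i$ around $a$, each such term has lowest-degree $(x - a)$-contribution equal to $p_i(a) u(a)^i (x - a)^i$ of monomial valuation exactly $\mu(p_i Q^i)$, while higher contributions have strictly greater monomial valuation. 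Letting $i^* := \min\{i : \mu(p_i Q^i) = \nu_Q(p)\}$, the $(x - a)^{i^*}$-coefficient of $p = \sum p_i Q^i$ receives a unique contribution of monomial valuation $\nu_Q(p)$ (from $j = i^*$) while contributions from $j < i^*$ all strictly exceed $\nu_Q(p)$; hence $\mu(p) \leq \nu_Q(p)$, and combined with the trivial $\mu(p) \geq \nu_Q(p)$ one obtains equality.

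For the converse ($\Leftarrow$), assume $\nu = \nu_Q$; then $\nu_Q$ is a valuation and, invoking the characterization of \cite{SopivNova}, $Q$ is a key polynomial for $\nu$, so $(a, \delta(Q))$ is a minimal pair by the first equivalence. Suppose for contradiction that some $b_0 \in \overline{K}$ satisfies $\mu(x - b_0) > \delta(Q)$. When $b_0 \in K$, the polynomial $p := Q - Q(b_0) \in K[x]$ furnishes the contradiction: the identity $\mu(Q(b_0)) = \sum_j \mu(b_0 - a_j) = \sum_j \delta_j = \nu(Q)$ (using $\mu(x - b_0) > \delta_j$ for every $j$) gives $\nu_Q(p) = \nu(Q)$, while factoring $p = (x - b_0) s$ in $K[x]$ and applying the minimal-pair identity $\mu(s) = \mu(s(a))$ (valid since $\deg s < \deg Q$) yields $\mu(s) = \mu(Q(b_0)) - \mu(a - b_0) = \nu(Q) - \delta(Q)$, whence $\mu(p) = \mu(x - b_0) + \mu(s) > \nu(Q) = \nu_Q(p)$. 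The general case $b_0 \in \overline{K} \setminus K$ is reduced to this one via the Galois-symmetric polynomial $\prod_\sigma (Q - Q(\sigma b_0)) \in K[x]$, whose $Q$-expansion has coefficients in $K$ given by elementary symmetric functions of the $Q(\sigma b_0)$'s. The principal technical obstacle is the no-cancellation bookkeeping in the ($\Rightarrow$) direction of the second equivalence, where the monomial structure of $\mu$ is essential to tracking how each $Q$-expansion term contributes to the $(x - a)$-expansion of $p$.
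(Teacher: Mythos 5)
Your reduction to Proposition \ref{Profmagica} (via the identity $\partial_r f = \sum_{|J|=n-r}\prod_{i\in J}(x-a_i)$) and your proof of the first equivalence coincide with the paper's argument and are correct. The difficulty lies in the ``moreover'' part, where you replace the paper's use of Proposition 2.10(ii) and Lemma 2.11 of \cite{SopivNova} by a direct computation, and this computation does not go through.

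In the direction (minimal pair of definition) $\Rightarrow$ $\nu=\nu_Q$, you write $Q=(x-a)u(x)$ ``with $u(a)\neq 0$'' and then declare that the lowest $(x-a)$-degree contribution $p_i(a)u(a)^i(x-a)^i$ of $p_iQ^i$ has monomial value exactly $\mu(p_iQ^i)$. Both steps fail in positive characteristic. If $Q$ is irreducible but inseparable the factor $u$ vanishes at $a$: take $K=\F_p((t))$ with the $t$-adic valuation, $a=t^{1/p}$, $\mu=\mu_a$ the monomial valuation at $a$ with $\delta(Q)=1$, and $Q=x^p-t$. Then $Q$ is a key polynomial and $(a,1)$ is a minimal pair of definition, yet $u=(x-a)^{p-1}$ and $u(a)=0$, so the term you single out is identically zero. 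More generally, your argument requires $\mu(u(a))=\mu(u)$, i.e.\ $\mu(\partial_1 Q(a))=\mu(Q)-\delta(Q)$, which is equivalent to the maximum in the definition of $\epsilon(Q)$ being attained at $r=1$; in characteristic $p$ it can be attained only at $r$ a power of $p$ (in the example above the only nonzero $\partial_r Q$ for $r\geq 1$ is $\partial_p Q=1$). Thus the minimum in $\mu_a(p_iQ^i)=\min_j(\mu(d_j)+j\delta(Q))$ need not occur at the lowest degree $j=i$, and your claim that ``higher contributions have strictly greater monomial valuation'' is false. The paper's proof avoids this entirely by quoting the structural lemma that if $\nu\neq\nu_Q$ then some key polynomial $Q'$ has $\epsilon(Q')>\epsilon(Q)$, and deriving a contradiction with the defining inequality $\delta(Q)\geq\mu(x-b)$.

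In the converse direction you begin with ``then $\nu_Q$ is a valuation and, invoking the characterization of \cite{SopivNova}, $Q$ is a key polynomial for $\nu$.'' No such characterization is available: the present paper shows in Corollary \ref{Corthatgivexamplenotconj} that $\nu_q$ being equal to $\nu$ does not force $q$ to be a key polynomial, and even restricting to monic irreducible $q$ the implication ``$\nu=\nu_Q$ $\Rightarrow$ $Q$ key'' is precisely part of what the theorem establishes, so it cannot be assumed. Since your subsequent computation (e.g.\ the identity $\mu(s)=\mu(s(a))$ for $\deg s<\deg Q$) again relies on $Q$ being a key polynomial, this direction also has a genuine gap. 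The Galois-symmetrization you sketch for $b_0\notin K$ is plausible but would need the no-cancellation analysis to be carried out for the elementary symmetric functions of the $Q(\sigma b_0)$, which is not done.
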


We also focus on relating key polynomials and pseudo-convergent sequences. A set of key polynomials $\Lambda$ with $\epsilon(Q)\neq \epsilon (Q')$ for $Q,Q'\in \Lambda$ with $Q\neq Q'$ is called a \textbf{complete sequence of key polynomials} if it is well-ordered (with the order $Q<Q'$ if and only if $\epsilon(Q)<\epsilon(Q')$) and for every $f\in K[x]$ there exists $Q\in \Lambda$ such that $\nu_Q(f)=\nu(f)$. In Theorem 1.1 of \cite{SopivNova} we prove that every valuation on $K[x]$ admits a complete sequence of key polynomials.

A \textbf{pseudo-convergent sequence} for $\nu$ is a well-ordered subset $\{a_{\rho}\}_{\rho<\lambda}$ of $K$, without last element, such that
\begin{equation}\label{eqdefipcs}
\nu(a_\sigma-a_\rho)<\nu(a_\tau-a_\sigma)\mbox{ for all }\rho<\sigma<\tau<\lambda
\end{equation}
(observe that this definition can be extended for any valued field $(K,\nu)$). An element $a\in K[x]$ is said to be a \textbf{limit} of the pseudo-convergent sequence $\{a_\rho\}_{\rho<\lambda}$ if $\nu(a-a_\rho)=\nu(a_{\rho+1}-a_\rho)$ for every $\rho<\lambda$. One can prove that for every polynomial $f(x)\in K[x]$, there exists $\rho_f<\lambda$ such that either
\begin{equation}\label{condforpscstotra}
\nu(f(a_\sigma))=\nu(f(a_{\rho_f}))\mbox{ for every }\rho_f\leq \sigma<\lambda,
\end{equation}
or
\begin{equation}\label{condforpscstoalg}
\nu(f(a_\sigma))>\nu(f(a_{\rho}))\mbox{ for every }\rho_f\leq \rho< \sigma<\lambda.
\end{equation}
If case (\ref{condforpscstotra}) happens, we say that the value of $f$ is fixed by $\{a_\rho\}_{\rho<\lambda}$ (or that $\{a_\rho\}_{\rho<\lambda}$ fixes the value of $f$). A pseudo-convergent sequence $\{a_\rho\}_{\rho<\lambda}$ is said to be of \textbf{transcendental type} if for every polynomial $f(x)\in K[x]$ the condition (\ref{condforpscstotra}) holds.
Otherwise, $\{a_\rho\}_{\rho<\lambda}$ is said to be of \textbf{algebraic type}, i.e., if there exists at least one polynomial for which condition (\ref{condforpscstoalg}) holds. In Theorem 1.2 of \cite{SopivNova}, we show how to obtain a sequence of key polynomials from a pseudo-convergent sequence and how to understand the different cases (algebraic and transcendental). In this paper we present the converse of that, i.e., we prove the following:
\begin{Prop}\label{propconvofnovaspiv}
Let $\nu$ be a valuation of $K[x]$ and let $\mu$ be an extension of $\nu$ to $\overline K[x]$. Let $\Lambda$ be a complete sequence of key polynomials for $\nu$, without last element. For each $Q\in\Lambda$, let $a_Q\in \overline K$ be a root of $Q$ such that $\mu(x-a_Q)=\delta(Q)$. Then $\{a_Q\}_{Q\in \Lambda}$ is a pseudo-convergent sequence of transcendental type, without a limit in $\overline K$, such that $x$ is a limit for it.
\end{Prop}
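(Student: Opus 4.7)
The plan is to verify four claims in turn: (i) $\{a_Q\}$ satisfies (\ref{eqdefipcs}); (ii) $x$ is a limit; (iii) no $a\in\overline K$ is a limit; and (iv) the sequence is of transcendental type. I rely on two properties from \cite{SopivNova}: that $\epsilon(Q)=\delta(Q)$ for every key polynomial $Q$ (with $\epsilon(f)\geq\delta(f)$ for any monic $f$), and that $\nu_Q(f)=\nu(f)$ forces $\epsilon(f)\leq \epsilon(Q)$.

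For (i), the order on $\Lambda$ is by $\epsilon$, so $Q<Q'$ implies $\delta(Q)<\delta(Q')$, i.e.\ $\mu(x-a_Q)<\mu(x-a_{Q'})$. The ultrametric inequality applied to $a_Q-a_{Q'}=(x-a_{Q'})-(x-a_Q)$ yields $\mu(a_Q-a_{Q'})=\delta(Q)$, and for $Q<Q'<Q''$ this gives $\mu(a_Q-a_{Q'})=\delta(Q)<\delta(Q')=\mu(a_{Q'}-a_{Q''})$. Since $\Lambda$ is well-ordered without last element, each $Q$ has an immediate successor $Q^+\in\Lambda$, and then $\mu(a_{Q^+}-a_Q)=\delta(Q)=\mu(x-a_Q)$ gives (ii).

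The main step is (iii). Suppose $a\in\overline K$ were a limit, so $\mu(a-a_Q)=\delta(Q)$ for every $Q$. Writing $x-a=(x-a_{Q^+})-(a-a_{Q^+})$, both summands have value $\delta(Q^+)$, so the ultrametric yields $\mu(x-a)\geq\delta(Q^+)>\delta(Q)$; since $Q$ was arbitrary, $\mu(x-a)>\epsilon(Q)$ for every $Q\in\Lambda$. Let $m_a\in K[x]$ be the minimal polynomial of $a$; since $a$ is a root, $\delta(m_a)\geq\mu(x-a)$, and then $\epsilon(m_a)\geq\delta(m_a)>\epsilon(Q)$ for every $Q\in\Lambda$. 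But completeness applied to $m_a$ produces $Q_0\in\Lambda$ with $\nu_{Q_0}(m_a)=\nu(m_a)$, forcing $\epsilon(m_a)\leq\epsilon(Q_0)$, a contradiction.

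Finally, (iv) follows formally from (iii): for $f(x)=c\prod_i(x-\alpha_i)\in\overline K[x]$, each $\mu(a_Q-\alpha_i)$ must be eventually constant (otherwise $\alpha_i$ would be a limit, contradicting (iii)), and since finitely many indices $i$ are involved, $\mu(f(a_Q))=\mu(c)+\sum_i\mu(a_Q-\alpha_i)$ is eventually constant as well. The main obstacle is the appeal in (iii) to the truncation lemma from \cite{SopivNova} that $\nu_Q(f)=\nu(f)$ implies $\epsilon(f)\leq\epsilon(Q)$; without it, the contradiction via completeness does not close.
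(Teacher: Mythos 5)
Your proof follows essentially the same route as the paper's, with one genuine gap that you yourself flagged. Parts (i) and (ii) coincide with the paper's argument: both rest on $\epsilon(Q)=\delta(Q)$ (which is Proposition \ref{Profmagica} of this paper, in fact proved here for all monic $f$, not just key polynomials, and not a result imported from \cite{SopivNova}) together with the ultrametric inequality. For the substantive claim you reverse the logical order, proving ``no limit in $\overline K$'' first and deducing ``transcendental type'' from it, whereas the paper assumes algebraic type and extracts a limit-like element $b\in\overline K$; over an algebraically closed field these reductions are the same, since a non-fixed polynomial factors into linear terms and a non-fixed linear term $x-b$ means precisely that $b$ is a limit. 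Your derivation of (iv) from (iii) via the standard dichotomy (for each $b$, the values $\mu(a_Q-b)$ are eventually constant or $b$ is a limit) is correct.

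The issue is the ``truncation lemma'' you invoke in (iii): that $\nu_{Q_0}(m_a)=\nu(m_a)$ forces $\epsilon(m_a)\leq\epsilon(Q_0)$ for an \emph{arbitrary} polynomial $m_a$. You correctly identify this as the weak link, and indeed the paper does not use such a statement. Proposition 2.10 \textbf{(ii)} of \cite{SopivNova}, as used here, concerns \emph{key} polynomials: if $Q'$ is a key polynomial with $\epsilon(Q')>\epsilon(Q)$, then $\nu_Q(Q')<\nu(Q')$. Since $m_a$ need not be a key polynomial, your appeal does not directly go through. The paper's fix, which you should adopt, is to note that $\epsilon(m_a)>\epsilon(Q)$ for all $Q\in\Lambda$ and then take $Q'$ monic of minimal degree among all $q\in K[x]$ with $\epsilon(q)\geq\epsilon(m_a)$; by the very definition of key polynomial, $Q'$ \emph{is} a key polynomial with $\epsilon(Q')\geq\epsilon(m_a)>\epsilon(Q)$ for every $Q\in\Lambda$. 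Now Proposition 2.10 \textbf{(ii)} applies to the pair $(Q,Q')$ and yields $\nu_Q(Q')<\nu(Q')$ for every $Q\in\Lambda$, and completeness applied to the polynomial $Q'$ (rather than to $m_a$) gives the contradiction. With this substitution the argument closes, and the remainder of your proof stands.
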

In Theorem 2 of \cite{Kapl}, Kaplansky proves that if $\nu$ is a valuation of $K$, and $\{a_\rho\}_{\rho<\lambda}$ is a pseudo-convergent sequence of transcendental type, without a limit on $K$, then $\{a_\rho\}_{\rho<\lambda}$ uniquely defines a valuation (also called $\nu$) on $K[x]$ (by setting $\nu(p(x))$ to be the fixed value of $p(x)\in K[x]$ by $\{a_\rho\}_{\rho<\lambda}$). Hence, Proposition \ref{propconvofnovaspiv} tells us that $\mu$ is determined by $\{a_Q\}_{Q\in \Lambda}$ in $\overline K[x]$.

The main tool to prove Theorem \ref{main1} and Proposition \ref{propconvofnovaspiv} is Proposition \ref{Profmagica}. This proposition tells us that for every monic polynomial $f\in K[x]$ we have $\epsilon(f)=\delta(f)$. This result is important on its own because it provides a more intuitive characterization of $\epsilon(f)$.

In this paper, we also study when a valuation is defined by finitely many (or equivalently, by one) key polynomials. Our first result is that the property $\nu=\nu_q$ does not characterize key polynomials, i.e., that there exists a valuation $\nu$ on $K[x]$ and a polynomial $q\in K[x]$ such that $\nu=\nu_q$ and $q$ is not a key polynomial for $\nu$ (see Corollary \ref{Corthatgivexamplenotconj}).

A valuation $\nu$ on $K[x]$ is called \textbf{valuation-algebraic} if $\nu(K(x))/\nu K$ is a torsion group and $K(x)\nu\mid K\nu$ is an algebraic extension. Otherwise, it is called \textbf{valuation-transcendental} (for more details, see Section \ref{Preliminaires}). Another important result of this paper is the following:
\begin{Teo}\label{Thmsobretruncvalutrans}
A valuation $\nu$ on $K[x]$ is valuation-transcendental if and only if there exists a polynomial $q\in K[x]$ such that $\nu=\nu_q$.
\end{Teo}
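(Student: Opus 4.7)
The plan is to connect valuation-transcendentality to minimal pairs of definition via Theorem \ref{main1}, bridged by the key-polynomial machinery of \cite{SopivNova}.

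For the forward direction ($\Rightarrow$), suppose $\nu$ is valuation-transcendental. Let $\Lambda$ be a complete sequence of key polynomials for $\nu$ (which exists by Theorem~1.1 of \cite{SopivNova}). I claim $\Lambda$ has a last element. Assume otherwise. Then by Proposition \ref{propconvofnovaspiv}, $\{a_Q\}_{Q\in\Lambda}$ is a pseudo-convergent sequence in $\overline K$ of transcendental type, without a limit in $\overline K$, and having $x$ as a limit. By Kaplansky's Theorem~2 of \cite{Kapl}, the extension $\mu$ of $\nu$ to $\overline K(x)$ is then immediate over $(\overline K,\mu)$, so $\mu(\overline K(x))=\mu(\overline K)$ and $\overline K(x)\mu=\overline K\mu$. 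Since $\mu(\overline K)/\nu K$ is torsion and $\overline K\mu/K\nu$ is algebraic, $\nu(K(x))/\nu K$ is torsion and $K(x)\nu/K\nu$ is algebraic, contradicting the hypothesis. Hence $\Lambda$ has a last element $Q$, and the monotonicity $\nu_{Q'}\leq\nu_Q\leq\nu$ for $Q'<Q$, combined with completeness, gives $\nu=\nu_Q$.

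For the backward direction ($\Leftarrow$), suppose $\nu=\nu_q$. Set $\gamma=\nu(q)$ and $d=\deg q$. Choose a root $a\in\overline K$ of $q$ with $\mu(x-a)=\delta(q)$ and let $Q\in K[x]$ be its minimal polynomial, so that $\delta(Q)=\delta(q)$. I plan to show that $(a,\delta(Q))$ is a minimal pair of definition for $\nu$; Theorem \ref{main1} then yields $\nu=\nu_Q$ with $Q$ a key polynomial, and the very existence of such a minimal pair of definition forces $\mu$ on $\overline K(x)$ to be valuation-transcendental (if $\delta(Q)\notin\mu(\overline K)$, the value group grows; otherwise, taking $c\in\overline K$ with $\mu(c)=\delta(Q)$, the residue of $(x-a)/c$ is transcendental over $\overline K\mu$). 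Since $\overline K(x)/K(x)$ is algebraic, this descends to show that $\nu$ on $K(x)$ is valuation-transcendental. The crucial step is thus to establish that $\mu(x-b)\leq\delta(q)$ for every $b\in\overline K$. Suppose, for contradiction, that $\mu(x-b_0)>\delta(q)$ for some $b_0$. By Proposition \ref{Profmagica}, $\delta(q)=\epsilon(q)\geq(\nu(q)-\nu(\partial_rq))/r$, so $\nu(\partial_rq)\geq\gamma-r\delta(q)$ for every $r\geq 1$; one checks (using that $\mu(x-b_0)$ exceeds $\delta(\partial_rq)\leq\delta(q)$) that $\mu(\partial_r q(b_0))=\nu(\partial_r q)$. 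Plugging into the Taylor expansion
\[
q(x)-q(b_0)=\sum_{r=1}^{d}\partial_r q(b_0)\,(x-b_0)^r,
\]
each summand has $\mu$-value at least $\gamma+r(\mu(x-b_0)-\delta(q))>\gamma$, so $\mu(q(x)-q(b_0))>\gamma$. When $b_0\in K$, this is already a contradiction: the $q$-expansion of $f:=q-q(b_0)\in K[x]$ is $-q(b_0)+1\cdot q$, giving $\nu_q(f)=\min(\nu(q(b_0)),\gamma)=\gamma<\nu(f)$, violating $\nu=\nu_q$. For $b_0\in\overline K\setminus K$, apply the analogous estimate to each factor of the norm $h:=\prod_{\sigma}(q-q(\sigma(b_0)))\in K[x]$, where $\sigma$ ranges over the $K$-embeddings of $K(b_0)$, and compare $\nu(h)$ with $\nu_q(h)$.

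The main obstacle is precisely this general $b_0\in\overline K\setminus K$ case: one must track which conjugates $\sigma(b_0)$ are close to $x$ in $\mu$ and verify that the strict inequality $\nu(h)>\nu_q(h)$ survives the product, since only the ``good'' conjugates contribute the strict increase. Handling this Galois-theoretic bookkeeping, together with confirming the auxiliary inequality $\epsilon(\partial_r q)\leq\epsilon(q)$ that underlies $\mu(\partial_r q(b_0))=\nu(\partial_r q)$, constitutes the technical heart of the argument.
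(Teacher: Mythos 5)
Your two directions diverge sharply in quality, so let me address them separately.

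For the implication ``valuation-transcendental $\Rightarrow$ $\nu=\nu_q$'' you take a genuinely different route from the paper. The paper splits into the value-transcendental and residue-transcendental cases and handles each directly: in the value-transcendental case one picks $q$ of minimal degree with $\nu(q)$ torsion-free over $\nu K$ and observes that the values $\nu(p_iq^i)$ in a $q$-expansion are pairwise distinct; in the residue-transcendental case one assumes $\nu\neq\nu_q$ for every $q$ and shows by induction on $\deg q$ that every residue is algebraic over $K\nu$. Your route instead invokes the existence of a complete sequence $\Lambda$ of key polynomials, Proposition \ref{propconvofnovaspiv}, and Kaplansky's Theorem 2 to conclude that if $\Lambda$ had no last element then $\overline K(x)/\overline K$ would be immediate, hence $\nu$ not transcendental. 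This is coherent and does give a nice structural explanation, but it relies on substantially heavier machinery (Theorem 1.1 of \cite{SopivNova}, the full force of Proposition \ref{propconvofnovaspiv}, Kaplansky's immediacy result) than the paper's elementary argument. It also quietly uses the monotonicity $\nu_{Q'}\leq\nu_Q$ for $Q'<Q$ in $\Lambda$, which you should cite explicitly from \cite{SopivNova} rather than assert.

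The implication ``$\nu=\nu_q$ $\Rightarrow$ valuation-transcendental'' is where there is a genuine gap. Your plan is to produce a minimal pair of definition $(a,\delta(Q))$ and then invoke Theorem \ref{main1}, but the crucial step --- showing $\mu(x-b)\leq\delta(q)$ for all $b\in\overline K$ --- is left unfinished, and the ingredient you flag, the auxiliary inequality $\epsilon(\partial_r q)\leq\epsilon(q)$ (equivalently $\delta(\partial_r q)\leq\delta(q)$), is \emph{false} in general. For instance with $K=\Q$, $\nu$ the $2$-adic valuation, $q=x^2-2$ and $\mu(x)>\frac{1}{2}$, one has $\delta(q)=\mu(x-\sqrt2)=\mu(x+\sqrt2)=\frac{1}{2}$ while $\partial_1q=2x$ has $\delta(\partial_1 q)=\mu(x)>\delta(q)$. (This particular example does not satisfy $\nu=\nu_q$, but it shows the inequality you lean on is not a formal consequence of Proposition \ref{Profmagica}; you would have to prove it using the hypothesis $\nu=\nu_q$, which is circular given that the claim $\mu(x-b)\leq\delta(q)$ is exactly what you are trying to establish.) Without the inequality, the key estimate $\mu(\partial_r q(b_0))\geq\nu(\partial_r q)$ breaks, since a root $c$ of $\partial_r q$ with $\mu(x-c)>\mu(x-b_0)$ would lower $\mu(\partial_r q(b_0))$. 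Beyond this, two further points need attention: (i) you only argue the ``of definition'' half; $(a,\delta(Q))$ being a \emph{minimal pair} additionally requires that $a$ have minimal degree among elements attaining the supremum, which your choice of $a$ as a root of $q$ does not guarantee; and (ii) your final step, that a minimal pair of definition forces valuation-transcendence, is itself essentially the theorem being proved (restated for $\overline K$ and $x-a$), so it needs its own justification rather than an assertion. The paper avoids all of this: it shows directly that if $\nu(q)$ is torsion over $\nu K$ and $\nu=\nu_q$, then the residue of a suitable $q^n/a$ is transcendental over $K\nu$, using nothing but the definition of $\nu_q$. You should compare with that argument, which is both shorter and free of the obstructions above.
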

The theorem above can be seen as the version of Theorem 3.11 of \cite{Kuhbadplaces} for key polynomials and truncations. Observe that if $\Lambda$ is a complete sequence of key polynomials for $\nu$ with last element $Q$, then $\nu=\nu_Q$. Hence, we conclude from Theorem \ref{Thmsobretruncvalutrans}, that if $\Lambda$ has a last element, then $\mu$ is valuation-transcendental.

This paper is divided as follows. In the next section we present the definitions and basic results which will be used in the sequel. In Section \ref{proofofthemman1}, we present the proofs of Proposition \ref{propconvofnovaspiv} and Theorem \ref{main1} and in Section \ref{Pproofthemouro} the proof of Theorem \ref{Thmsobretruncvalutrans}. We also present an appendix where we discuss the  importance of key polynomials for the local uniformization problem.
\par\medskip
\textbf{Acknowledgements.} I would like to thank Franz-Viktor Kuhlmann for a careful reading, for providing useful suggestions and for pointing out a few mistakes in an earlier version of this paper.

\section{Preliminaries}\label{Preliminaires}

Take a commutative noetherian ring $R$ ($R$ may have zero divisors and even non-zero nilpotent elements) and an abelian group $\Gamma$. Take $\infty$ to be an element not in $\Gamma$ and set $\Gamma_\infty$ to be $\Gamma\cup\{\infty\}$ with extensions of addition and order as usual.

\begin{Def}
A \textbf{valuation on $R$} is a map $\nu:R\lra \Gamma_\infty$ such that the following holds:
\begin{description}
\item[(V1)] $\nu(ab)=\nu(a)+\nu(b)$ for every $a,b\in R$,
\item[(V2)] $\nu(a+b)\geq \min\{\nu(a),\nu(b)\}$ for every $a,b\in R$,
\item[(V3)] $\nu(1)=0$ and $\nu(0)=\infty$,
\item[(V4)] $\SU(\nu):=\{a\in R\mid \nu(a)=\infty\}$ is a minimal prime ideal of $R$.
\end{description}
\end{Def}

Observe that this definition implies that if $\nu(a)\neq \nu(b)$, then
\[
\nu(a+b)=\min\{\nu(a),\nu(b)\}.
\]
Given a valuation $\nu$ on $R$, we define the \textbf{value group of $\nu$} and denote by $\nu R$ to be the subgroup of $\Gamma$ generated by $\{\nu(a)\mid a\in R\setminus \SU(\nu)\}$. If $R$ is a field, then we define the \textbf{valuation ring of $\nu$} by $\VR_\nu:=\{a\in R\mid \nu(a)\geq 0\}$. The ring $\VR_\nu$ is a local ring with unique maximal ideal $\MI_\nu:=\{a\in R\mid \nu(a)>0\}$. In this case, we define the \textbf{residue field of $\nu$}, denoted by denote by $R\nu$ to be the field $\VR_\nu/\MI_\nu$ (for $f\in R$ we denote by $f\nu$ the image of $f$ in $R\nu$).

Throughout this paper we will fix a field $K$, a valuation $\nu$ on $K[x]$, the polynomial ring in one indeterminate over $K$, an algebraic closure $\overline K$ of $K$ and an extension $\mu$ of $\nu$ to $\overline K[x]$.

\begin{Obs}
The valuations $\nu$ as above describe all the valuations extending $\nu_0=\nu|_K$ to simple extensions $K(a)$ of $K$. Indeed, if $\SU(\nu):=\{p\in K[x]\mid\nu(p)=\infty\}$ is the zero ideal, then $\nu$ extends in an obvious way to $K(x)$ where $x$ is a transcendental element. If $\SU(\nu)\neq (0)$, then there exists $p(x)\in K[x]$ monic and irreducible such that $\SU(\nu)= (p)$. Hence, $\nu$ defines a valuation on $K[x]/(p)=K(a)$ for some element $a\in\overline K$ with minimal polynomial $p(x)$.
\end{Obs}

The case when $\SU(\nu)=(p)\neq (0)$ is trivial for our purposes, since $p$ will be a key polynomial and $\nu=\nu_p$. Hence, we assume from now on, that $\SU(\nu)= (0)$. In this case, $\nu$ can be extended to $K(x)$ and we can consider the residue field extension $K(x)\nu|K\nu$. The valuation $\nu$ is called \textbf{value-transcendental} if there exists $f\in K(x)$ such that $\nu(f)$ is torsion-free over $\nu K$. On the other hand, $\nu$ is called \textbf{residue-transcendental} if there exists $f\in \VR_{K(x)}$ ($\VR_{K(x)}:=\{f\in K(x)\mid\nu(f)\geq 0\}$) such that $f\nu$ is transcendental over $K\nu$. It is immediate from the definition that $\nu$ is valuation-transcendental if and only if it is residue-transcendental or value-transcendental.

We proceed now with the discussion of when $\nu_q$ is a valuation. In Example 2.5 of \cite{SopivNova}, we  show that $\nu_q$ does need to be a valuation and that if $Q$ is a key polynomial, then $\nu_Q$ is a valuation (Proposition 2.6). We will show now that polynomials $q$ for which $\nu_q$ is a valuation do not need to be key polynomials.
\begin{Prop}
Let $\nu$ be a valuation of $K[x]$ and $q\in K[x]$ such that $\nu=\nu_q$. Then $\nu=\nu_{q^2}$.
\end{Prop}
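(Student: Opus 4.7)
The plan is to pass from the $q$-expansion of an arbitrary $p\in K[x]$ to its $q^2$-expansion by grouping consecutive terms in pairs, and then to sandwich $\nu_{q^2}(p)$ between $\nu(p)$ and $\nu_q(p)$. Concretely, I would start by writing the $q$-expansion $p=p_0+p_1q+\ldots+p_nq^n$ and setting $r_i:=p_{2i}+p_{2i+1}q$ for $i=0,1,\ldots$ (with $p_{n+1}=0$ if $n$ is even). Since $\deg p_{2i}<\deg q$ and $\deg(p_{2i+1}q)<2\deg q$, each $r_i$ either vanishes or has degree strictly less than $\deg(q^2)$, so
\[
p=\sum_i r_iq^{2i}
\]
is precisely the $q^2$-expansion of $p$.

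Next I would establish the two inequalities. For $\nu_{q^2}(p)\le\nu(p)$, I would just invoke axiom (V2) on the display above to conclude $\nu(p)\ge\min_i\nu(r_iq^{2i})=\nu_{q^2}(p)$; this requires no hypothesis on $q$ whatsoever. For the reverse inequality, I would again apply (V2), this time to $r_iq^{2i}=p_{2i}q^{2i}+p_{2i+1}q^{2i+1}$, to get $\nu(r_iq^{2i})\ge\min\{\nu(p_{2i}q^{2i}),\nu(p_{2i+1}q^{2i+1})\}$. Taking the minimum over $i$ therefore yields
\[
\nu_{q^2}(p)\;\ge\;\min_{0\le j\le n}\nu(p_jq^j)\;=\;\nu_q(p),
\]
and the hypothesis $\nu=\nu_q$ then gives $\nu_{q^2}(p)\ge\nu(p)$. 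Combining the two inequalities shows $\nu_{q^2}(p)=\nu(p)$ for every $p\in K[x]$, which is the claim.

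I do not anticipate any genuine obstacle here: the entire argument rests on the ultrametric inequality (V2) applied once to the whole expansion and once to each length-two block, together with the trivial verification that pairing up successive $p_i$'s produces the correct $q^2$-expansion. The only point worth being careful about is the degree bound $\deg r_i<2\deg q$, which guarantees uniqueness of the $q^2$-expansion and hence identifies the minimum above with $\nu_{q^2}(p)$.
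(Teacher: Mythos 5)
Your proof is correct and takes essentially the same route as the paper: both pass to the $q^2$-expansion by pairing consecutive terms of the $q$-expansion. The paper establishes $\nu_{q^2}(p)=\nu(p)$ in a single chain of equalities by using $\nu=\nu_q$ to evaluate $\nu\bigl((p_{2i}+p_{2i+1}q)q^{2i}\bigr)$ exactly, whereas you split the argument into the two unconditional inequalities $\nu_q(p)\le\nu_{q^2}(p)\le\nu(p)$ (each from (V2)) and then invoke $\nu=\nu_q$, which is the same computation organized slightly differently.
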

\begin{proof}
Take any polynomial $p\in K[x]$ and let
\[
p=p_0+p_1q+\ldots+p_nq^n
\]
be the $q$-expansion of $p$. Assume that $n$ is odd, by setting $p_n=0$ if necessary. Then the $q^2$-expansion of $p$ is
\[
p=(p_0+p_1q)+(p_2+p_3q)q^2+\ldots+(p_{n-1}+p_nq)\left(q^2\right)^{\frac{n-1}{2}}.
\]
Since $\nu=\nu_q$ we have that
\[
\nu\left((p_{2i}+p_{2i+1}q)q^{2i}\right)=\min\{\nu(p_{2i}q^{2i}),\nu(p_{2i+1}q^{2i+1})
\]
and consequently
\begin{eqnarray*}
\nu_{q^{2}}(p)&=&\min_{0\leq i\leq \frac{n-1}{2}}\{\nu\left((p_{2i}+p_{2i+1}q)q^{2i}\right)\}\\
              &=&\min_{0\leq i\leq \frac{n-1}{2}}\{\min\{\nu(p_{2i}q^{2i}),\nu(p_{2i+1}q^{2i+1})\}=\nu_q(p)=\nu(p).
\end{eqnarray*}
\end{proof}

\begin{Cor}\label{Corthatgivexamplenotconj}
There exists a valuation $\nu$ on $K[x]$ and a polynomial $q\in K[x]$ such that $\nu=\nu_q$ but $q$ is not a key polynomial.
\end{Cor}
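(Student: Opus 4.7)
The plan is to apply the preceding proposition to a trivial base case. Take any valuation $\nu_0$ on a field $K$ and extend it to $K[x]$ by the Gauss formula $\nu(\sum a_ix^i):=\min_i\nu_0(a_i)$. Set $Q:=x$. The $x$-expansion of $\sum a_ix^i$ is literally the polynomial itself, so $\nu_x=\nu$ is immediate, and $Q=x$ trivially satisfies the key-polynomial condition, since constants have $\epsilon=-\infty$ and no nonconstant polynomial has degree less than $1$. Applying the preceding proposition with $q=x$ then yields $\nu=\nu_{x^2}$, so the candidate counterexample is $(\nu,q):=(\nu,x^2)$.

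What remains is to verify that $q=x^2$ is not a key polynomial. A short computation with the formal derivatives $\partial_1(x^2)=2x$ and $\partial_2(x^2)=1$ gives $\epsilon(x^2)=0$, using the $r=2$ term so as to avoid any issue in characteristic two; the same exercise yields $\epsilon(x)=0$. Thus $\epsilon(x)\geq\epsilon(x^2)$ while $\deg(x)<\deg(x^2)$, which directly contradicts the defining condition of a key polynomial applied to $x^2$. Hence $x^2$ is not a key polynomial, completing the corollary.

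The only substantive ingredient is the preceding proposition; beyond that the argument is a direct verification inside the Gauss extension. I do not foresee any genuine obstacle — the one minor care point is to rely on the $r=2$ term when computing $\epsilon(x^2)$ so that the conclusion is characteristic-free.
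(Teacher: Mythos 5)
Your construction is essentially the paper's: both start from a valuation on $K[x]$ that visibly equals its own $x$-truncation (the paper takes the $x$-adic extension, you take the Gauss extension), both invoke the preceding proposition to get $\nu=\nu_{x^2}$, and both then argue that $x^2$ is not a key polynomial. The one genuine difference is in the last step: the paper cites Proposition 2.4(ii) of \cite{SopivNova} (key polynomials are irreducible), whereas you verify the failure of the defining inequality directly by computing $\epsilon(x)=\epsilon(x^2)$ with $\deg(x)<\deg(x^2)$. Your route is self-contained and does not import the irreducibility result; the paper's is shorter but relies on that external fact. Both are correct. Your computation of $\epsilon(x^2)$ is careful to use the $r=2$ term so that the argument is characteristic-free, and the values check out: with $\nu(x)=0$ one gets $\epsilon(x)=\epsilon(x^2)=0$, while with the $x$-adic choice $\nu(x)=\gamma>0$ one gets $\epsilon(x)=\epsilon(x^2)=\gamma$; either way the key-polynomial condition for $x^2$ fails against $f=x$. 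One small remark: the sentence asserting that $x$ \emph{is} a key polynomial is a harmless sanity check but is not actually used anywhere in the argument; all you need is the identity $\nu=\nu_x$, which is immediate from the $x$-expansion.
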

\begin{proof}
Let $\nu$ be a non-trivial valuation on $K$ and let $\nu$ be the $x$-adic valuation on $K[x]$. This means that $\nu=\nu_x$ and the above proposition guarantees that $\nu=\nu_{x^2}$. Since key polynomials are irreducible (see Proposition 2.4 \textbf{(ii)} of \cite{SopivNova}) $q=x^2$ is not a key polynomial for $\nu$.
\end{proof}

\section{Proofs of Proposition \ref{propconvofnovaspiv} and Theorem \ref{main1}}\label{proofofthemman1}
Before proceeding with the proof of Theorem \ref{main1} we will make a remark about the formal derivatives. Using an independent variable $y$ we can obtain that
\[
f(x+y)=\sum_{i=0}^{\deg(f)}\partial_if(x)y^i\mbox{ where }\partial_0f:=f.
\]
For polynomials $f,g\in K[x]$ we have that
\[
fg(x+y)=f(x+y)g(x+y)=\sum_{i=0}^{\deg(f)+\deg(g)}\left(\sum_{j=0}^i\partial_jf(x)\partial_{i-j}g(x)\right)y^i.
\]
Since the coefficients of the Taylor expansion are uniquely determined, this gives us that for every $r$
\begin{equation}\label{eqleibtaylor}
\partial_r(fg)=\sum_{j=0}^r\partial_jf(x)\partial_{r-j}g(x).
\end{equation}

In order to prove Proposition \ref{propconvofnovaspiv} and Theorem \ref{main1}, we will need the following proposition.
\begin{Prop}\label{Profmagica}
Let $f\in K[x]$ be a monic polynomial. Then $\delta(f)=\epsilon(f)$.
\end{Prop}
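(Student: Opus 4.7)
The plan is to pass to $\overline K[x]$, factor the monic polynomial $f$ as $f(x)=\prod_{i=1}^n(x-a_i)$ with $a_i\in\overline K$, and re-express every formal derivative $\partial_r f$ as a symmetric-function-type sum in the roots, which then lets one read off $\mu$-values via additivity. The starting identity is $f(x+y)=\prod_i(x+y-a_i)$; extracting the coefficient of $y^r$ (or inducting on $n$ using (\ref{eqleibtaylor})) yields
$$\partial_r f(x)=\sum_{|T|=r}\ \prod_{i\notin T}(x-a_i),$$
where the outer sum runs over all $r$-element subsets $T\subseteq\{1,\ldots,n\}$. Since $\partial_r f\in K[x]$ and $\mu$ extends $\nu$, we have $\nu(\partial_r f)=\mu(\partial_r f)$, so the computations can be carried out with $\mu$ throughout.

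For the inequality $\epsilon(f)\leq\delta(f)$, I would apply the ultrametric inequality to the sum above:
$$\nu(\partial_r f)=\mu(\partial_r f)\geq\min_{|T|=r}\sum_{i\notin T}\mu(x-a_i)=\mu(f)-\max_{|T|=r}\sum_{i\in T}\mu(x-a_i).$$
Since $\mu(x-a_i)\leq\delta(f)$ for every $i$, the maximum on the right is at most $r\,\delta(f)$, so $(\nu(f)-\nu(\partial_r f))/r\leq\delta(f)$ for every $r$, and taking the max over $r$ gives $\epsilon(f)\leq\delta(f)$.

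For the reverse inequality, the key is to find a specific $r$ for which the above estimate is sharp. Set $T_0:=\{i\mid \mu(x-a_i)=\delta(f)\}$ and $r_0:=|T_0|\geq 1$. In the expansion of $\partial_{r_0}f$, the term indexed by $T_0$ itself has $\mu$-value exactly $\mu(f)-r_0\delta(f)$. For any other index set $T'$ with $|T'|=r_0$, the sets $T_0\setminus T'$ and $T'\setminus T_0$ have the same positive cardinality $k$, and the difference of $\mu$-values of the two corresponding products is
$$\sum_{i\in T_0\setminus T'}\mu(x-a_i)-\sum_{i\in T'\setminus T_0}\mu(x-a_i)=k\,\delta(f)-\sum_{i\in T'\setminus T_0}\mu(x-a_i)>0,$$
because each of the $k$ summands on the right is strictly less than $\delta(f)$ by the definition of $T_0$. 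Hence the term indexed by $T_0$ is the unique term of minimal $\mu$-value in the sum for $\partial_{r_0}f$, so $\nu(\partial_{r_0}f)=\mu(f)-r_0\delta(f)$, and $(\nu(f)-\nu(\partial_{r_0}f))/r_0=\delta(f)$.

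The only step that needs genuine argument, and thus the main obstacle, is this uniqueness of the minimum at $r=r_0$: without it one could only conclude $\epsilon(f)\leq\delta(f)$ with no matching lower bound. Everything else reduces to the root expansion of $\partial_r f$ and the axioms (V1)--(V2).
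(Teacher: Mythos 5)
Your proposal is correct and follows essentially the same route as the paper: factor $f$ over $\overline K$, express $\partial_r f$ as a sum of products over $(n-r)$-element complements of root subsets, then show the term indexed by $T_0=\{i:\mu(x-a_i)=\delta(f)\}$ is the unique $\mu$-minimal summand of $\partial_{r_0}f$ (the paper calls this set $I(f)$, splits the complements into symmetric-difference pieces exactly as you do, and draws the same conclusion). The two directions of the inequality are handled identically; the only differences are notational.
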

\begin{proof}
Let $a_1,\ldots, a_n\in\overline K$ be all the roots (not necessarily distinct) of $f$. Then
\[
f(x)=\prod_{i=1}^n (x-a_i).
\]
For every subset $I$ of $N:=\{1,\ldots,n\}$ we denote by $I^c$ the complement of $I$ in $N$, i.e., $I^c:=N\setminus I$. For every $s$, $1\leq s\leq n$, using interatively Equation (\ref{eqleibtaylor}), we have
\begin{equation}\label{eqderprod}
\partial_s(f)=\sum_{l_1<l_2<\ldots<l_s\leq n}\left(\prod_{i\in \{l_1,\ldots,l_s\}^c}(x-a_i)\right).
\end{equation}
Let
\[
I(f):=\{i\in N\mid \mu(x-a_i)=\delta(f)\}
\]
and $r=|I(f)|$. We claim that
\begin{equation}\label{eqguarmaio}
\nu(\partial_r(f))=\sum_{i\in I(f)^c}\mu(x-a_i).
\end{equation}
Indeed, take any subset $J$ of $N$ with $r$ many elements and assume that $J\neq I(f)$. Write $J^c=J_1\cup J_2$ where $J_1= J^c\cap I(f)$ and $J_2=J^c\cap I(f)^c$. Observe that since $J\neq I(f)$ we have that $J_1\neq \emptyset$. Write $I(f)^c=I_1\cup J_2$. Since $|J^c|=|I(f)^c|=n-r$, we have that $|J_1|=|I_1|$ and moreover
\[
\mu\left(\prod_{i\in J_1}(x-a_i)\right)=|J_1|\delta(f)=|I_1|\delta(f)>\mu\left(\prod_{i\in I_1}(x-a_i)\right).
\]
Hence,
\begin{displaymath}
\begin{array}{rcl}
\displaystyle\mu\left(\prod_{i\in J^c}(x-a_i)\right)&=&\displaystyle\mu\left(\prod_{i\in J_1}(x-a_i)\right)+\mu\left(\prod_{i\in J_2}(x-a_i)\right)\\ \\
&>&\displaystyle\mu\left(\prod_{i\in I_1}(x-a_i)\right)+\mu\left(\prod_{i\in J_2}(x-a_i)\right)\\ \\
&=&\displaystyle\mu\left(\prod_{i\in I(f)^c}(x-a_i)\right).
\end{array}
\end{displaymath}
This and the equality (\ref{eqderprod}) give us the equality (\ref{eqguarmaio}).

We now have that
\[
\nu(f)-\nu(\partial_r(f))=\sum_{i\in N}\mu(x-a_i)-\sum_{i\in I(f)^c}\mu(x-a_i)=\sum_{i\in I(f)}\mu(x-a_i)=r\delta(f).
\]
Hence $\delta(f)\leq \epsilon(f)$. On the other hand, fix $s$, $1\leq s\leq n$ and set $J:=\{l_1,\ldots,l_s\}\subseteq N$ with $l_1<l_2<\ldots<l_s$ for which
\[
\mu\left(\prod_{i\in \{l_1,\ldots,l_s\}^c}(x-a_i)\right)
\]
achieves its minimum. Then
\[
\nu(\partial_sf)\geq \mu\left(\prod_{i\in J^c}(x-a_i)\right)
\]
and consequently
\[
\nu(f)-\nu(\partial_s f)\leq \mu\left(\prod_{i\in J}(x-a_i)\right)\leq s\delta(f).
\]
Therefore, $\epsilon(f)\leq \delta(f)$ which concludes our proof.
\end{proof}
\begin{Obs}
Observe that from the proposition above, we conclude that the number $\delta(f)$ does not depend on the choice of the valuation $\mu$ extending $\nu$.
\end{Obs}

We proceed now to prove Proposition \ref{propconvofnovaspiv}. 

\begin{proof}[Proof of Proposition \ref{propconvofnovaspiv}]
We will start by proving that $\{a_Q\}_{Q\in \Lambda}$ is a pseudo-convergent sequence and that $x$ is a limit of $\{a_Q\}_{Q\in \Lambda}$. For $Q\in \Lambda$, by Proposition \ref{Profmagica} and the assumption on $a_Q$'s, we have
\[
\mu(x-a_{Q})=\delta(Q)=\epsilon(Q).
\]
Let $Q,Q'\in \Lambda$ with $Q'>Q$. Then $\mu(x-a_{Q'})=\epsilon(Q')>\epsilon(Q)=\mu(x-a_Q)$ and consequently
\begin{equation}\label{eqobepsdelta}
\mu(a_{Q'}-a_Q)=\min\{\mu(x-a_{Q'}),\mu(x-a_Q)\}=\mu(x-a_{Q})=\epsilon(Q).
\end{equation}
If $Q_1,Q_2,Q_3\in \Lambda$ with $Q_1<Q_2<Q_3$, then by equation (\ref{eqobepsdelta}) we obtain that
\begin{equation}\label{eqkeypolpcs1}
\mu(a_{Q_3}-a_{Q_2})=\epsilon(Q_2)>\epsilon(Q_1)=\mu(a_{Q_2}-a_{Q_1}).
\end{equation}
Equations (\ref{eqobepsdelta}) and (\ref{eqkeypolpcs1}) imply that $\{a_Q\}_{Q\in\Lambda}$ is a pseudo-convergent sequence and that $x$ is a limit of it.

It remains to show that $\{a_Q\}_{Q\in\Lambda}$ is of transcendental type. Assume that not, so there exists a polynomial $f(x)\in \overline K[x]$ not fixed by $\{a_Q\}_{Q\in\Lambda}$. In particular, there exists an irreducible polynomial not fixed by $\{a_Q\}_{Q\in\Lambda}$ and since $\overline K$ is algebraically closed, this means that there exists $b\in \overline K$ and $Q_0\in K[x]$ such that for every $Q',Q''\in \Lambda$ with $Q''>Q'>Q_0$ we have
\begin{equation}\label{eqfunciona}
\mu(a_{Q''}-b)>\mu(a_{Q'}-b).
\end{equation}
This implies that
\[
\mu(x-b)>\mu(x-a_Q)\mbox{ for every }Q\in \Lambda.
\]
Indeed, if $\nu(x-b)\leq \nu(x-a_Q)$ for some $Q\in \Lambda$, then
\[
\mu(b-a_{Q'})=\min\{\mu(x-b),\mu(x-a_{Q'})\}=\mu(x-b)\mbox{ for every }Q'>Q.
\]
Since $\Lambda$ does not have a last element, there exist $Q',Q''\in \Lambda$ with $Q''>Q'>\max\{Q,Q_0\}$. This implies that $\mu(b-a_{Q''})=\mu(x-b)=\mu(b-a_{Q'})$ which is a contradiction to Equation (\ref{eqfunciona}).

Let $p_b\in K[x]$ be the minimal polynomial of $b$ over $K$. Then
\[
\epsilon(p_b)=\delta(p_b)\geq \mu(x-b)>\mu(x-a_Q)=\epsilon(Q)\mbox{ for every }Q\in \Lambda.
\]
Take $Q'$ to be a monic polynomial of smallest degree in $\{q\in K[x]\mid \epsilon(p_b)\leq \epsilon(q)\}$. Then, by the definition of key polynomial, $Q'$ is a key polynomial with $\epsilon(p_b)\leq\epsilon(Q')$. This implies, by Proposition 2.10 \textbf{(ii)} of \cite{SopivNova} that
\[
\nu_Q(Q')<\nu(Q')\mbox{ for every }Q\in\Lambda
\]
and this is a contradiction to the fact that $\Lambda$ is a complete sequence of key polynomials for $\nu$.
\end{proof}

We will present now the proof of Theorem \ref{main1}.

\begin{proof}[Proof of Theorem \ref{main1}]
Assume first that $Q$ is a key polynomial for $\nu$. Take $b\in \overline K$ such that $\mu(a-b)\geq \delta(Q)$. Since $\mu(x-a)=\delta(Q)$, this implies that $\mu(x-b)\geq \delta(Q)$. Hence $\delta(p_b)\geq \delta(Q)$, where $p_b$ is the minimal polynomial of $b$ over $K$. Proposition \ref{Profmagica} gives us that $\epsilon(p_b)\geq \epsilon(Q)$ and since $Q$ is a key polynomial, this gives us that $\deg(Q)\leq\deg(p_b)$ which implies that $[K(a):K]\leq [K(b):K]$.

For the converse, assume that for every $b\in\overline K$, if $\mu(a-b)\geq \delta(Q)$, then $[K(a):K]\leq [K(b):K]$. We want to prove that for every polynomial $f\in K[x]$, if $\deg(f)<\deg(Q)$, then $\epsilon(f)<\epsilon(Q)$. Since $\deg(f)<\deg(Q)$, by our assumption we obtain that $\mu(a-b)<\delta(Q)$, for every root $b$ of $f$. Hence,
\[
\mu(x-b)=\min\{\mu(x-a),\mu(a-b)\}=\mu(a-b)<\delta(Q).
\]
This implies, using Proposition \ref{Profmagica}, that $\epsilon(f)=\delta(f)<\delta(Q)=\epsilon(Q)$. Therefore, $Q$ is a key polynomial.

For the second part of the proof, assume that $\nu=\nu_Q$. We will show that $\mu(x-a)\geq \mu(x-b)$ for every $b\in \overline K$. Assume, towards a contradiction that there exists $b\in \overline K$ such that $\mu(x-a)<\mu(x-b)$. By Proposition \ref{Profmagica} we have that $\epsilon(Q)<\epsilon(p_b)$, where $p_b$ is the minimal polynomial of $b$ over $K$. Reasoning as in the proof of Proposition \ref{propconvofnovaspiv}, there exists a key polynomial $Q'\in K[x]$ such that $\epsilon(Q')\geq\epsilon(p_b)$. Then we have that $\epsilon(Q)<\epsilon(Q')$ which implies, by Proposition 2.10 \textbf{(ii)} of \cite{SopivNova}, that $\nu_Q(Q')<\nu(Q')$. Therefore, $\nu_Q\neq \nu$, which is a contradiction.

For the converse, assume that $(a,\delta(Q))$ is a minimal pair of definition of $\nu$. We have to show that $\nu_Q=\nu$. If this were not the case, then by Lemma 2.11 of \cite{SopivNova}, there would exist a key polynomial $Q'$ such that $\epsilon(Q)<\epsilon(Q')$. Let $b\in \overline K$ be a root of $Q'$ such that $\mu(x-b)=\delta(Q')$. Then, by Proposition \ref{Profmagica}, we have
\[
\mu(x-b)=\delta(Q')=\epsilon(Q')>\epsilon(Q)=\delta(Q)\geq\mu(x-a)
\]
which is a contradiction.
\end{proof}
\section{Proof of Theorem \ref{Thmsobretruncvalutrans}}\label{Pproofthemouro}

We proceed now with the proof of Theorem \ref{Thmsobretruncvalutrans}.

\begin{proof}[Proof of Theorem \ref{Thmsobretruncvalutrans}]
Assume that $\nu=\nu_q$ for some polynomial $q$. If $\nu q$ is torsion free over $\nu K$, then $\nu$ is value-transcendental and we are done. Hence, we can assume that there exist $a\in K^\times$ and $n\in\N$ such that $\nu\left(\frac{q^n}{a}\right)=0$. This means that $\frac{q^n}{a}\nu\neq 0$. We will prove that $\frac{q^n}{a}\nu$ is transcendental over $K\nu$. Assume that there exist $a_1,\ldots,a_r\in \VR_K$ such that
\[
\sum_{i=0}^r a_i\nu \left(\frac{q^n}{a}\nu\right)^i=0.
\]
Since $\nu=\nu_q$, this implies that for every $j$, $0\leq j\leq r$ we have
\[
\nu\left(a_{j}\left(\frac{q^{n}}{a}\right)^{j}\right)\geq\min_{0\leq i\leq r}\left\{\nu\left(a_i\left(\frac{q^{n}}{a}\right)^i\right)\right\}=\nu\left(\sum_{i=0}^ra_i\left(\frac{q^{n}}{a}\right)^i\right)>0.
\]
Hence,
\[
a_j\nu \left(\frac{q^{n}}{a}\nu\right)^j=\left(a_{j}\left(\frac{q^{n}}{a}\right)^{j}\right)\nu=0,
\]
and since $\frac{q^n}{a}\nu\neq 0$ this implies that $a_j\nu=0$. Therefore, $\frac{q^n}{a}\nu$ is transcendental over $K\nu$ and $\nu$ is residue-transcendental.

Now assume that $\nu$ is value-transcendental. Then there exists $q\in K[x]$ such that $\nu q$ is torsion free over $\nu K$. Choose $q$ with smallest possible degree. This means that for every $p\in K[x]$ with $\deg(p)<\deg(q)$ there exists $n\in \N$ such that $n\nu(p)\in \nu K$. In particular, for every $p_1,p_2\in K[x]$ with $\deg(p_1),\deg(p_2)<\deg(q)$ and for distinct $i_1,i_2\in\N$, we have
\begin{equation}\label{diffevaluecasetrans}
\nu\left(p_1q^{i_1}\right)\neq \nu\left(p_2q^{i_2}\right).
\end{equation}
For every $p\in K[x]$, let $p=p_0+\ldots+p_nq^n$ be the $q$-expansion of $p$. Then Equation (\ref{diffevaluecasetrans}) implies that
\[
\nu(p)=\min_{0\leq i\leq n}\{\nu(p_iq^i)\}=\nu_q(p).
\]

It remains to show that if $\nu$ is residue-transcendental, then there exists $q\in K[x]$ such that $\nu = \nu_q$. Observe that in this case, by the fundamental inequality, $\nu\left (K(x)\right)$ is torsion over $\nu K$. Choose an algebraic closure $\overline K$ of $K$ and an extension $\mu$ of $\nu$ to $\overline K[x]$. Then, for every polynomial $p\in \overline K[x]$, there exists $a\in \overline K$ such that $\mu(ap)=0$.

Assume, towards a contradiction, that for every polynomial $q\in K[x]$ there exists $p\in K[x]$ such that $\nu_q(p)<\nu(p)$. We will show that every element in $K(x)\nu$ is algebraic over $K\nu$. We will start by showing, by induction on the degree, that for every polynomial $q\in K[x]$ and every $a\in\overline K$ with $\mu(aq)= 0$ we have that $\left(aq\right)\mu$ is algebraic over $K\nu$. If $\deg(q)=0$, then our assertion is immediate. Assume now that $\deg(q)>0$ and that for every polynomial $p$ of degree smaller than $\deg(q)$ and every $a\in \overline K$ with $\mu(ap)=0$ we have that $\left(ap\right)\mu$ is algebraic over $K\nu$. By our assumption, there exist $p_0,\ldots,p_r\in K[x]$ such that
\begin{equation}\label{eqabkeypolnotwork}
\nu\left(\sum_{i=0}^rp_iq^i\right)>\min_{0\leq i\leq r}\{\nu\left(p_iq^i\right)\}.
\end{equation}
Take any $a\in\overline K$ such that $\mu(aq)=0$. Adding $\mu(a^r)$ to equation (\ref{eqabkeypolnotwork}) we obtain that
\begin{equation}\label{eqabkeypolnotwork1}
\mu\left(\sum_{i=0}^r(a^{r-i}p_i)(aq)^i\right)>\min_{0\leq i\leq r}\{\mu\left((a^{r-i}p_i)(aq)^i\right)\}=\min_{0\leq i\leq r}\{\mu(a^{r-i}p_i)\}.
\end{equation}
Choose $i_0$ such that
\[
\mu(a^{r-i_0}p_{i_0})=\min_{0\leq i\leq r}\{\mu(a^{r-i}p_i)\}
\]
and $c\in \overline K$ such that $\mu(ca^{r-i_0}p_{i_0})=0$. Then $\mu(ca^{r-i}p_i)\geq 0$ for every $i$. Adding $\mu(c)$ to equation (\ref{eqabkeypolnotwork1}) gives us
\begin{equation}\label{eqabkeypolnotwork2}
\mu\left(\sum_{i=1}^r(ca^{r-i}p_i)(aq)^i\right)>\mu(ca^{r-i_0}p_{i_0})=0
\end{equation}
and subtracting $\mu(ca^{r-i_0}p_{i_0})$ gives us
\begin{equation}\label{eqabkeypolnotwork3}
\mu\left(\sum_{i=0}^r\left(\frac{ca^{r-i}p_i}{ca^{r-i_0}p_{i_0}}\right)(aq)^i\right)>0.
\end{equation}
The equation above guarantees that
\begin{equation}\label{equantalgelemcoefinbt}
0=\left(\displaystyle\sum_{i=0}^r\left(\frac{ca^{r-i}p_i}{ca^{r-i_0}p_{i_0}}\right)(aq)^i\right)\mu=\sum_{i=0}^r\frac{(ca^{r-i}p_i)\mu}{(ca^{r-i_0}p_{i_0})\mu}((aq)\mu)^i.
\end{equation}
Since
\[
\frac{(ca^{r-i_0}p_{i_0})\mu}{(ca^{r-i_0}p_{i_0})\mu}=1\neq 0,
\]
equation (\ref{equantalgelemcoefinbt}) implies that $(aq)\mu$ is algebraic over $K\nu\left((ca^{r-i}p_i)\mu\mid i=0,\ldots,r\right)$ and by induction hypothesis also over $K\nu$. Now take any element $p/q\in K(x)$ with $\nu(p/q)\geq 0$. If $\nu(p)>\nu(q)$, then $\left(p/q\right)\nu$ is zero and hence algebraic over $K\nu$. If $\nu(p)=\nu(q)$, then there exists $a\in \overline K$ such that $\mu(ap)=\mu(aq)=0$. By the first part we have that $(aq)\mu$ and $(ap)\mu$ are algebraic over $K\nu$. Hence $(p/q)\nu=(ap/aq)\mu=(ap)\mu/(aq)\mu$ is algebraic over $K\nu$ and this concludes our proof.
\end{proof}

\section*{Appendix: The local uniformization problem}\label{Locunifprob}

\textit{Resolution of singularities} for an algebraic variety is an important topic in algebraic geometry. For an algebraic variety $X$ over a field $k$, a resolution of singularities is a modification (i.e., a proper birational morphism) $X'\lra X$ such that $X'$ has no singularities. \textit{Local uniformization} is the local version of resolution of singularities. Namely, for a valuation $\nu$ on $k(X)$ having a center $x$ in $X$, a local uniformization for $\nu$ is a modification $X'\lra X$ such that the center of $\nu$ in $X'$ is non-singular.

Local uniformization was introduced by Zariski in order to prove resolution of singularities. His approach consists of two steps: proving local uniformization for every valuation and use these local solutions to obtain a resolution of all singularities. Zariski succeeded in 1940 (see \cite{Zar}) in proving local uniformization for valuations having a center on any algebraic variety over a field of characteristic zero. He used this to prove resolution of singularities for algebraic varieties of dimension smaller or equal to three over a field of characteristic zero. Hironaka proved in 1964 (see \cite{Hir_1}) that every variety over a field of characteristic zero admits resolution of singularities without using valuations.

Abhyankar proved in 1956 (see \cite{Ab_4}), using Zariski's approach, resolution of singularities for algebraic varieties of dimension three and characteristic greater than five. In 2009, Cossart and Piltant concluded the proof (see \cite{Cos1} and \cite{Cos2}) of resolution of singularities for dimension three and any positive characteristic (they also used Zariski's approach). However, both resolution of singularities and local uniformization are open problems for algebraic varieties of dimension greater than three and positive characteristic.

Although the problem of local uniformization is still open, various programs for its resolution have gained strength in recent years. Two of them are those by Knaf and Kuhlmann using \textit{ramification theory} and by Spivakovsky using the \textit{theory of local blow-ups}. Knaf and Kuhlmann use pseudo-convergent sequences and Spivakovsky uses key polynomials in their respective programs. Since these objects appear in a similar way in the respective programs, it is important to understand their relation. This was partially done in Theorem 1.2 of \cite{SopivNova}. This paper provides a new insight in the search for a dictionary between these two programs. We believe that with such dictionary we will be able to obtain more precise results towards a full resolution of the local uniformization problem in positive characteristic.

Since local uniformization is a local problem, it can be reduced to local domains: a valuation centered on a local domain $(R,\MI)$ (i.e., for which $(R,\MI)$ is dominated by $(\VR_\nu,\MI_\nu)$) admits local uniformization if there exist $b_1,\ldots,b_n\in \VR_\nu$ such that $R^{(1)}:=R[b_1,\ldots,b_n]_{\MI_\nu\cap R[b_1,\ldots,b_n]}$ is regular (the map $R\lra R^{(1)}$ is called a \textbf{local blow-up of $R$ along $\nu$}). In Spivakovsky's program to solve the local uniformization problem, a minimal set of generators $(u_1,\ldots,u_d)$ of $\MI$ is fixed. Then, there exists an extension (which we call again $\nu$) of $\nu$ to the completion $k[[x_1,\ldots,x_d]]$ of $R$ with respect to $\nu$ (we assume that we are in the equicharacteristic case, i.e., that $k=R/\MI$ embeds in $R$). The goal is to prove that $\nu$ admits (a stronger version) of local uniformization on $k[[x_1,\ldots,x_d]]$ and use this to prove that $\nu$ admits local uniformization on $R$.

Now we can consider for each $i$, $0\leq i\leq d-1$ the extension of the induced valuation (which we call $\nu_i$) from $k((x_1,\ldots,x_i))$ to $k((x_1,\ldots,x_i))[x_{i+1}]$. In order to prove that $\nu$ admits local uniformization, it is important to understand the interaction between local blow-ups and the key polynomials that define the valuation $\nu_i$. This paper provides a way of interpreting key polynomials in terms of minimal pairs. We intend to use this to implement results from other areas (for instance, ramification theory) that have been used in similar settings (like those of Knaf and Kuhlmann) to Spivakovsky's program for solving the local uniformization problem in positive characteristic.

\noindent{\footnotesize JOSNEI NOVACOSKI\\
ICMC - USP\\
Av. Trabalhador S\~ao-Carlense, 400\\
13566-590 - S\~ao Carlos - SP\\
Email: {\tt jan328@mail.usask.ca} \\\\
\end{document}